\newtheorem{theorem}{Theorem}
\newtheorem{remark}[theorem]{Remark}
\newtheorem{lemma}[theorem]{Lemma}
\newtheorem{definition}[theorem]{Definition}
\DeclareMathOperator*{\divergenz}{div}              %
\DeclareMathOperator*{\Ss}{S}
\newcommand{\N}{\mathbb{N}}
\newcommand{\R}{\mathbb{R}}
\newcommand{\RN}{\mathbb{R}^N}
\newcommand{\Lp}[1]{L^{#1}(\Omega)}
\newcommand{\Wp}[1]{W^{1,#1}(\Omega)}
\newcommand{\Wpzero}[1]{W^{1,#1}_0(\Omega)}
\newcommand{\ph}{\varphi}
\newcommand{\Om}{\Omega}
\newcommand{\rand}{\partial\Omega}
\newcommand{\into}{\int_{\Omega}}
\newcommand{\weak}{\rightharpoonup}
\newcommand{\close}{\overline{\Omega}}
\renewcommand{\l}{\left}
\renewcommand{\r}{\right}
\newcommand{\Hi}{\mathcal H_i}
\numberwithin{theorem}{section}
\numberwithin{equation}{section}
\title[Existence and uniqueness of elliptic systems]{Existence and uniqueness of elliptic systems with double phase operators and convection terms}
\author[G.\,Marino]{Greta Marino}
\address[G.\,Marino]{Technische Universit\"{a}t Chemnitz, Fakult\"{a}t f\"{u}r Mathematik, Reichenhainer Stra\ss e 41, 09126 Chemnitz, Germany}
\email{greta.marino@mathematik.tu-chemnitz.de}
\author[P.\,Winkert]{Patrick Winkert}
\address[P.\,Winkert]{Technische Universit\"{a}t Berlin, Institut f\"{u}r Mathematik, Stra\ss e des 17.\,Juni 136, 10623 Berlin, Germany}
\email{winkert@math.tu-berlin.de}
\subjclass[2010]{35J15, 35J62, 35J92, 35P30}
\keywords{Double phase problems, convection term, pseudomonotone operators, existence results, uniqueness, elliptic systems}
\begin{document}

\begin{abstract}
        In this paper we study quasilinear elliptic systems driven by so-called double phase operators and nonlinear right-hand sides depending on the gradients of the solutions. Based on the surjectivity result for pseudomonotone operators we prove the existence of at least one weak solution of such systems. Furthermore, under some additional conditions on the data, the uniqueness of weak solutions is shown.   
\end{abstract}

\maketitle

\section{Introduction}

In this paper, given a bounded domain $\Om \subset \RN$, $N\geq 2$, with a Lipschitz boundary $\rand$, we are concerned with the existence and uniqueness of solutions to the following elliptic system
	\begin{align}
	\label{syst}
	\begin{aligned}
	-\divergenz \l(|\nabla u|^{p_1-2} \nabla u+ \mu_1(x) |\nabla u|^{q_1-2} \nabla u\r)&= f_1(x, u, v, \nabla u, \nabla v) \quad && \text{in } \Om, \\
	-\divergenz \l(|\nabla v|^{p_2-2} \nabla v+ \mu_2(x) |\nabla v|^{q_2-2} \nabla v\r)&= f_2(x, u, v, \nabla u, \nabla v) \quad && \text{in } \Om, \\
	u&= v= 0 && \text{on } \rand,
	\end{aligned}
	\end{align}
where $1< p_i< q_i< N$, $\mu_i\colon \close \to [0, \infty)$ are Lipschitz continuous and $f_i\colon \Omega \times \R \times \R \times \RN \times \RN \to \R$ are Carath\'eodory functions, $i= 1, 2$, that satisfy suitable structure conditions, see hypotheses (H) in Section \ref{section_3}.

Here, the operator is the so-called double-phase operator, that is
\begin{align*}
        -\divergenz \l(|\nabla u|^{p-2} \nabla u+ \mu(x) |\nabla u|^{q-2} \nabla u\r)\quad \text{for }u\in \Wp{\mathcal{H}},
\end{align*}
where $1<p<q<N$ and with a suitable Sobolev Musielak-Orlicz space $\Wp{\mathcal{H}}$, see its definition in Section \ref{section_2}. Such an operator is the extension of the so-called weighted $(q,p)$-Laplacian when $\inf_{\close} \mu>0$ and of the $p$-Laplace differential operator when $\mu\equiv 0$. 

The novelty of this work is an existence and uniqueness result for problems of the form \eqref{syst} by using the surjectivity result for pseudomonotone operators, see Definition \ref{SplusPM} and Theorem \ref{theorem_pseudomonotone}. To the best of our knowledge, this is the first work dealing with a double phase operator and a convection term (that is, the right-hand side depends on the gradient of the solution) in the context of elliptic systems. 

 Zhikov \cite{Zhikov-1986} was the first who studied so-called double phase operators in order to describe models of strongly anisotropic materials by studying the functional
\begin{align}\label{integral_minimizer}
   u \mapsto \int \left(|\nabla  u|^p+\mu(x)|\nabla  u|^q\right)\,dx,
\end{align}
where $1<p<q<N$, see also Zhikov \cite{Zhikov-1995}, \cite{Zhikov-1997} and the monograph of Zhikov-Kozlov-Oleinik \cite{Zhikov-Kozlov-Oleinik-1994}. Functionals of the expression \eqref{integral_minimizer} have been studied by several
 authors with respect to regularity results and nonstandard growth, see for example,
 Baroni-Colombo-Mingione \cite{Baroni-Colombo-Mingione-2015}, \cite{Baroni-Colombo-Mingione-2016}, \cite{Baroni-Colombo-Mingione-2018},
 Baroni-Kuusi-Mingione \cite{Baroni-Kuusi-Mingione-2015},
 Cupini-Marcellini-Mascolo \cite{Cupini-Marcellini-Mascolo-2015},
 Co\-lom\-bo-Mingione \cite{Colombo-Mingione-2015a}, \cite{Colombo-Mingione-2015b},
 Marcellini \cite{Marcellini-1989},
 \cite{Marcellini-1991} and the references therein.

 The motivation of this work was on the one hand the work of Gasi\'nski-Winkert \cite{Gasinski-Winkert-2020b} who proved existence and uniqueness for the problem
 \begin{equation}\label{problem}
    \begin{aligned}
	-\divergenz\left(|\nabla u|^{p-2}\nabla u+\mu(x) |\nabla u|^{q-2}\nabla u\right) & =f(x,u,\nabla u)\quad && \text{in } \Omega,\\
	u & = 0 &&\text{on } \partial \Omega,
    \end{aligned}
\end{equation}
following the paper of Averna-Motreanu-Tornatore \cite{Averna-Motreanu-Tornatore-2016}. On the other side, we were also motivated by the paper of Motreanu-Vetro-Vetro \cite{Motreanu-Vetro-Vetro-2016} who treated elliptic systems for $(p_i,q_i)$-Laplace operators of the form
\begin{equation}\label{problem2}
        \begin{aligned}
                -\Delta_{p_1}u_1-\mu_1 \Delta_{q_1} u_1 & =f_1(x,u_1,u_2,\nabla u_1, \nabla u_2)\quad && \text{in } \Omega,\\
                -\Delta_{p_2}u_2-\mu_2 \Delta_{q_2} u_2 & =f_2(x,u_1,u_2,\nabla u_1, \nabla u_2)\quad && \text{in } \Omega,\\
                u_1=u_2 & = 0 &&\text{on } \partial \Omega.
    \end{aligned}
\end{equation}
The idea in the current paper is to combine both problems \eqref{problem} and \eqref{problem2} which gives our model problem \eqref{syst}. Such new class of problems brings lots of difficulties to be overcome like the Orlicz space in order to deal with the double phase operator, the gradient dependence of the right-hand side which implies that we cannot use variational tools and the fact that we treat this for elliptic systems. Our results extend those in Gasi\'nski-Winkert \cite{Gasinski-Winkert-2020b} and Motreanu-Vetro-Vetro \cite{Motreanu-Vetro-Vetro-2016}.

In the case of single-valued equations like \eqref{problem} without convection we refer to the works of Colasuonno-Squassina \cite{Colasuonno-Squassina-2016}, Gasi\'nski-Papageorgiou \cite{Gasinski-Papageorgiou-2019}, Gasi\'nski-Winkert \cite{Gasinski-Winkert-2020a},  Liu-Dai \cite{Liu-Dai-2018}, Perera-Squassina \cite{Perera-Squassina-2019} concerning existence and multiplicity results.

Elliptic systems with the shape as in \eqref{problem2} have been considered by a number of authors. Existence results can be found, for examples, in Boccardo-de Figueiredo \cite{Boccardo-de-Figueiredo-2002}, Carl-Motreanu \cite{Carl-Motreanu-2017}, \cite{Carl-Motreanu-2015}, Dr\'{a}bek-Stavrakakis-Zographopoulos \cite{Drabek-Stavrakakis-Zographopoulos-2003}, Motreanu-Vetro-Vetro \cite{Motreanu-Vetro-Vetro-2018} and the references therein.

 Works which are closely related to our paper dealing with certain types
 of double phase problems, convection terms or elliptic systems can be found in Bahrouni-R\u{a}dulescu-Repov\v{s} \cite{Bahrouni-Radulescu-Repovs-2019}, Bahrouni-R\u{a}dulescu-Winkert \cite{Bahrouni-Radulescu-Winkert-2019}, Cencelj-R\u{a}dulescu-Repov\v{s} \cite{Cencelj-Radulescu-Repovs-2018}, Marano-Marino-Moussaoui \cite{Marano-Marino-Moussaoui-2019}, Marano-Winkert \cite{Marano-Winkert-2019}, Marino-Winkert \cite{Marino-Winkert-2020}, Motreanu-Winkert \cite{Motreanu-Winkert-2019}, Papageorgiou-R\u{a}dulescu-Repov\v{s} \cite{Papageorgiou-Radulescu-Repovs-2019a}, \cite{Papageorgiou-Radulescu-Repovs-2019b}, \cite{Papageorgiou-Radulescu-Repovs-2020}, R\u{a}dulescu \cite{Radulescu-2019}, Zhang-R\u{a}dulescu \cite{Zhang-Radulescu-2018}, Zheng-Gasi\'nski-Winkert-Bai \cite{Zheng-Gasinski-Winkert-Bai-2020} and the references therein.
 
 The paper is organized as follows. In Section \ref{section_2}  we recall the definition of the Musielak-Orlicz spaces $\Lp{\mathcal{H}}$ and its corresponding Sobolev spaces $\Wp{\mathcal{H}}$ and we recall the surjectivity result for pseudomonotone operators. In Section \ref{section_3} we present the full assumptions on the data of problem \eqref{syst},
 give the definition of the weak solution and state and prove our main existence result, see Theorem \ref{main}. In the last part, namely Section \ref{section_4}, we state some conditions on $f_i$, $i=1,2$, in order to prove the uniqueness of weak solutions of \eqref{syst}, see Theorem \ref{theorem_uniqueness}.

\section{Preliminaries}\label{section_2}

For every $1 \le r< \infty$ we consider the usual Lebesgue spaces $L^r(\Om)$ and $L^r(\Om; \RN)$ equipped with the norm $\|\cdot \|_r$. When $1< r< \infty$ we denote by $W^{1, r}(\Om)$ and $W^{1,r}_0(\Om)$ the corresponding Sobolev spaces  equipped with the norms $\|\cdot \|_{1,r}$ and $\|\cdot \|_{1,r,0}$, respectively. By $r'$, we denote the conjugate of $r \in (1,\infty)$, that is, $\frac{1}{r}+\frac{1}{r'}=1$. 

For $i= 1, 2$ we define functions $\mathcal H_i\colon \Om \times [0, \infty) \to [0, \infty)$ by
\begin{align*}
        \mathcal H_i(x, t)= t^{p_i}+ \mu_i(x) t^{q_i},
\end{align*}
where $1< p_i< q_i< N$ and
\begin{align}\label{1}
        \frac{q_i}{p_i}< 1+ \frac{1}{N}, \qquad \mu_i\colon \close \to [0, \infty) \text{ is Lipschitz continuous.}
\end{align}
	
\begin{remark}
        From the condition above we easily see that  
        \begin{align*}
                q_i< p_i^*, \quad i= 1, 2,
        \end{align*}
		where $p_i^*$ is the critical Sobolev exponent of $p_i$ given by
		\begin{align*}
			p^*_i:= \frac{N p_i }{N- p_i}.
		\end{align*}
        Indeed, for fixed $ i \in \{1, 2\}$, we have to show that $ \displaystyle q_i< \frac{N p_i }{N- p_i}$, that is $Nq_i - p_i q_i< Np_i $. From condition \eqref{1} we have $ Nq_i - p_i< Np_i $. Moreover, since $q_i> 1$, we have 
        \begin{align*}
                N q_i - p_i q_i< N q_i - p_i< Np_i ,
        \end{align*}
        which shows the assertion.
\end{remark}

Let  $\displaystyle \rho_{\mathcal H_i}(u):= \into \mathcal H_i(x, |u|) dx$. Then the Musielak-Orlicz space $L^{\mathcal H_i}(\Om)$ is given by
\begin{align*}
        L^{\mathcal{H}_i}(\Omega)=\left \{u ~ \Big | ~ u: \Omega \to \R \text{ is measurable and } \rho_{\mathcal{H}_i}(u)< +\infty \right \}
\end{align*}
equipped with the Luxemburg norm
\begin{align*}
        \|u\|_{\mathcal H_i}:= \inf \l\{ \tau>0 : \, \rho_{\mathcal H_i}\l(\frac{u}{\tau}\r) \le 1 \r\}.
\end{align*}
Then $L^{\mathcal H_i}(\Om)$  becomes uniformly convex, and so a reflexive Banach space. Moreover we define the space
\begin{align*}
    L^{q_i}_{\mu_i}(\Omega)=\left \{u ~ \Big | ~ u: \Omega \to \R \text{ is measurable and } \into \mu_i(x) | u|^{q_i} \,dx< +\infty \right \}
\end{align*}
endowed with the seminorm 
\begin{align*}
        \|u\|_{q_i, \mu_i}:= \l(\into \mu_i(x) |u|^{q_i} dx \r)^{\frac{1}{q_i}}.
\end{align*}
From Colasuonno-Squassina \cite[Proposition 2.15]{Colasuonno-Squassina-2016} we have the following continuous embeddings
\begin{align*}
        L^{q_i}(\Om) \hookrightarrow L^{\mathcal H_i}(\Om) \hookrightarrow L^{p_i}(\Om) \cap L_{\mu_i}^{q_i}(\Om).
\end{align*}
For $u \ne 0$ we have $\displaystyle \rho_{\mathcal H_i}\l(\frac{u}{\|u\|_{\mathcal H_i}}\r)= 1$, so it is easy to see that
\begin{align}	\label{2}
        \min \left\{\|u\|_{\mathcal H_i}^{p_i}, \|u\|_{\mathcal H_i}^{q_i}\right\} 
        \leq \|u\|_{p_i}^{p_i}+ \|u\|_{q_i, \mu_i}^{q_i} 
        \leq \max \left\{\|u\|_{\mathcal H_i}^{p_i}, \|u\|_{\mathcal H_i}^{q_i}\right\}
\end{align}
for every $u \in L^{\mathcal H_i}(\Om)$. Then we can introduce the corresponding Sobolev space $W^{1, \mathcal H_i}(\Om)$ defined by
\begin{align*}
        W^{1, \mathcal H_i}(\Om):= \left\{u \in L^{\mathcal H_i}(\Om) : \, |\nabla u| \in L^{\mathcal H_i}(\Om) \right\}
\end{align*}
with the norm
\begin{align*}
        \|u\|_{1, \mathcal H_i}:= \|\nabla u \|_{\mathcal H_i}+ \|u\|_{\Hi},
\end{align*}
where $\|\nabla u\|_{\Hi}= \|\, |\nabla u| \, \|_{\Hi}$. 

Moreover, we write $W^{1, \Hi}_0(\Om):= \overline{C^{\infty}_0(\Om)}^{\|\cdot \|_{1, \Hi}}$ being the completion of $C^\infty_0(\Omega)$ in $W^{1, \mathcal H_i}(\Om)$. Taking \eqref{1} into account  we can refer to Colasuonno-Squassina \cite[Proposition 2.18]{Colasuonno-Squassina-2016} in order to consider an equivalent norm on $W^{1, \Hi}_0(\Om)$ given by
	\begin{align*}
	\|u\|_{1, \Hi, 0}= \|\nabla u\|_{\Hi}.
	\end{align*}
Note that $W^{1, \Hi}(\Om)$ as well as $W^{1, \Hi}_0(\Om)$ are uniformly convex, and so reflexive Banach spaces. 

Since $1< p_i< N$, we know that the embedding
\begin{align}\label{embedding_sobolev}
            W^{1,\mathcal{H}_i}_0(\Omega) \hookrightarrow \Lp{r_i}
\end{align}
is compact whenever $r_i<p^*_i$, see Colasuonno-Squassina \cite[Proposition 2.15]{Colasuonno-Squassina-2016}.
%
%
%
%

From equation \eqref{2} we directly obtain
\begin{align}\label{5}
    \begin{split}
        \min \l\{\|u\|_{1, \mathcal H_i, 0}^{p_i}, \|u\|_{1, \mathcal H_i, 0}^{q_i}\r\} 
        & \leq \|\nabla u\|_{p_i}^{p_i}+ \|\nabla u\|_{q_i, \mu_i}^{q_i}\\ 
        & \leq \max \l\{\|u\|_{1, \mathcal H_i, 0}^{p_i}, \|u\|_{1, \mathcal H_i, 0}^{q_i}\r\}
    \end{split}
\end{align}
for every $u \in W^{1, \Hi}_0(\Om)$. 

For $1< r< \infty$ we consider now the eigenvalue problem for the $r$-Laplacian with homogeneous Dirichlet boundary condition given by
	\begin{align}
	\label{6}
	\begin{aligned}
	-\Delta_r u&= \lambda |u|^{r-2} u \quad && \text{in } \Om, \\
	u&= 0 && \text{on } \rand.
	\end{aligned}
	\end{align}
Let us denote by $\lambda_{1, r}$ the first eigenvalue of \eqref{6}. It is well known that $\lambda_{1, r}$ is positive, simple, and isolated, see L{\^e} \cite{Le-2006}. Moreover, we have the following variational characterization 
	\begin{align}
	\label{7}
	\lambda_{1, r}= \inf_{u \in W^{1,r}_0(\Om)} \l\{\into |\nabla u|^r dx: \, \into |u|^r dx= 1\r\}.
	\end{align}
	
We now recall some definitions that we will use in the sequel. 

\begin{definition}\label{SplusPM}
    Let $X$ be a reflexive Banach space, $X^*$ its dual space and denote by $\langle \cdot \,, \cdot\rangle$ its duality pairing. Let $A\colon X\to X^*$, then $A$ is called
    \begin{enumerate}[leftmargin=1cm]
	\item[(a)]
	    to satisfy the $(\Ss_+$)-property if $u_n \weak u$ in $X$ and $\limsup_{n\to \infty} \langle Au_n,u_n-u\rangle \leq 0$ imply $u_n\to u$ in $X$;
	 \item[(b)]
	    pseudomonotone if $u_n \weak u$ in $X$ and $\limsup_{n\to \infty} \langle Au_n,u_n-u\rangle \leq 0$ imply $Au_n \weak u$ and $\langle Au_n,u_n\rangle \to \langle Au,u\rangle$;
    \item[(c)]
        coercive if
            \begin{align}
            \label{coercivity}
            \lim_{\|u\|_X \to \infty} \frac{\langle Au, u \rangle}{\|u\|_X}= \infty.
            \end{align}
    \end{enumerate}
\end{definition}

Our existence result is based on the following surjectivity result for pseudomonotone operators, see, e.\,g., Carl-Le-Motreanu \cite[Theorem 2.99]{Carl-Le-Motreanu-2007} or Papageorgiou-Winkert \cite[Theorem 6.1.57]{Papageorgiou-Winkert-2018}.

\begin{theorem}\label{theorem_pseudomonotone}
    Let $X$ be a real, reflexive Banach space, let $A\colon X\to X^*$ be a pseudomonotone, bounded, and coercive operator, and $b\in X^*$. Then, a solution of the equation $Au=b$ exists.
\end{theorem}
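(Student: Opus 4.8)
The plan is to establish this classical surjectivity result by the Galerkin method, using Brouwer's fixed point theorem (in the form of the \emph{acute angle lemma}) on finite-dimensional subspaces and then passing to the limit via pseudomonotonicity. First I would reduce to the case $b=0$: the operator $\tilde A u:=Au-b$ is again bounded; it is coercive because $|\lan b,u\ran|\le\|b\|_{X^*}\|u\|_X$ so that $\lan\tilde Au,u\ran/\|u\|_X\ge\lan Au,u\ran/\|u\|_X-\|b\|_{X^*}\to\infty$; and it is pseudomonotone because $u_n\weak u$ with $\lims\lan\tilde Au_n,u_n-u\ran\le0$ implies $\lims\lan Au_n,u_n-u\ran\le0$ (since $\lan b,u_n-u\ran\to0$), whence $Au_n\weak Au$ and $\lan Au_n,u_n\ran\to\lan Au,u\ran$, and therefore $\tilde Au_n\weak\tilde Au$ and $\lan\tilde Au_n,u_n\ran\to\lan\tilde Au,u\ran$. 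Thus it suffices to produce $u\in X$ with $Au=0$.

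From the coercivity \eqref{coercivity} I would next extract an a priori bound: there is $R>0$ such that $\lan Au,u\ran>0$ for every $u\in X$ with $\|u\|_X=R$. For the finite-dimensional step, fix a finite-dimensional subspace $F\subseteq X$ with inclusion $i_F\colon F\to X$ and put $A_F:=i_F^*\circ A\circ i_F\colon F\to F^*$. A bounded pseudomonotone operator is demicontinuous (apply part (b) of Definition \ref{SplusPM} to a strongly convergent sequence, using boundedness to get $\lan Au_n,u_n-u\ran\to0$), and since $F^*$ is finite-dimensional, $A_F$ is then continuous; moreover $\lan A_Fu,u\ran=\lan Au,u\ran$ for $u\in F$, so $A_F$ points outward on the sphere $\{u\in F:\|u\|_X=R\}$. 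Identifying $F$ and $F^*$ with $\R^{\dim F}$ through a basis and its dual basis, so that the duality pairing becomes the Euclidean inner product, the acute angle lemma provides $u_F\in F$ with $\|u_F\|_X\le R$ and $A_Fu_F=0$, that is,
\begin{align*}
\lan Au_F,v\ran=0\qquad\text{for all }v\in F.
\end{align*}

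Finally I would pass to the limit along the net $(u_F)$ indexed by the finite-dimensional subspaces of $X$ directed by inclusion. Since $\|u_F\|_X\le R$, reflexivity yields a subnet $u_{F_\alpha}\weak u$ in $X$. For every finite-dimensional subspace $F$ one has $F_\alpha\supseteq F$ eventually; choosing $v=u_{F_\alpha}\in F_\alpha$, and also $v=u$ once $F_\alpha\supseteq\mathrm{span}\{u\}$, gives $\lan Au_{F_\alpha},u_{F_\alpha}-u\ran=0$ for all large $\alpha$, so the hypothesis of the pseudomonotonicity condition (in its net form) holds. Hence $Au_{F_\alpha}\weak Au$ in $X^*$; since $\lan Au_{F_\alpha},v\ran=0$ whenever $v\in F_\alpha$, letting $\alpha$ run yields $\lan Au,v\ran=0$ for all $v\in X$, i.e.\ $Au=0$. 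If $X$ is separable one may instead work from the start with an increasing sequence of finite-dimensional subspaces with dense union and argue with subsequences throughout.

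The finite-dimensional solvability and the a priori estimate are routine; the step I expect to be the main obstacle is this last limit passage, because the pseudomonotonicity condition in Definition \ref{SplusPM} is phrased for sequences whereas the Galerkin scheme naturally produces a net. Making it rigorous in a possibly non-separable reflexive space requires the notion of a \emph{generalized pseudomonotone} operator, formulated for nets, which for bounded operators is equivalent to pseudomonotonicity, and then running the above selection argument within that framework.
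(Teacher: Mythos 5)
The paper does not prove Theorem \ref{theorem_pseudomonotone} at all: it is quoted as a classical surjectivity result with pointers to Carl--Le--Motreanu \cite[Theorem 2.99]{Carl-Le-Motreanu-2007} and Papageorgiou--Winkert \cite[Theorem 6.1.57]{Papageorgiou-Winkert-2018}, so there is no in-paper argument to compare against. Your sketch is precisely the standard proof given in those sources: reduction to $b=0$, the a priori radius $R$ from coercivity, demicontinuity of bounded pseudomonotone maps, the acute angle lemma (which indeed holds on the norm ball $\{u\in F:\|u\|_X\le R\}$, e.g.\ via Brouwer applied to $u\mapsto P_K(u-A_Fu)$), and the pseudomonotone limit passage; all of these steps are correct as you state them. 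The one point you rightly single out as delicate is the last one: the Galerkin scheme over all finite-dimensional subspaces yields a net, while Definition \ref{SplusPM}(b) is sequential. Besides the generalized-pseudomonotonicity route you mention, the standard fix is to extract, via weak compactness of $\overline{B}_R$ and the Eberlein--\v{S}mulian theorem, a genuine \emph{sequence} $u_{F_n}\weak u$ with $F_n$ increasing and $\bigcup_n F_n$ containing $u$ and any prescribed test vector, after which the sequential definition applies verbatim; and for the application in this paper the issue is moot anyway, since the spaces $W^{1,\mathcal H_i}_0(\Omega)$ are separable, so one may work from the outset with a nested sequence of subspaces with dense union, exactly as you propose in your closing remark.
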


We consider the space $\mathcal{W}:=W^{1, \mathcal H_1}_0(\Om) \times W^{1, \mathcal H_2}_0(\Om)$ endowed with the norm 
	\begin{align*}
	\|(u, v)\|_{\mathcal{W}}:= \|u\|_{1, \mathcal H_1, 0}+ \| v \|_{1, \mathcal H_2, 0}, 
	\end{align*}
for every $(u, v) \in W^{1, \mathcal H_1}_0(\Om) \times W^{1, \mathcal H_2}_0(\Om)$.

Then we consider the operator $A\colon W^{1, \mathcal H_1}_0(\Om) \times W^{1, \mathcal H_2}_0(\Om) \to (W^{1, \mathcal H_1}_0(\Om))^* \times (W^{1, \mathcal H_2}_0(\Om))^* $ defined by
        \begin{align}\label{A}
            \begin{split}
                    \langle A(u, v), (\varphi, \psi)\rangle_{\mathcal H_1 \times \mathcal H_2}&:= \into \l(|\nabla u|^{p_1-2} \nabla u+ \mu_1(x) |\nabla u|^{q_1-2} \nabla u \r) \cdot \nabla \varphi dx \\
                    & \quad +  \into \l(|\nabla v|^{p_2-2} \nabla v+ \mu_2(x) |\nabla v|^{q_2-2} \nabla v \r) \cdot \nabla \psi dx,
            \end{split}
        \end{align}
        where $ \langle \cdot \, , \cdot \rangle_{\mathcal H_1 \times \mathcal H_2}$ is the duality pairing between $W^{1, \mathcal H_1}_0(\Om) \times W^{1, \mathcal H_2}_0(\Om)$ and its dual space $ (W^{1, \mathcal H_1}_0(\Om))^* \times (W^{1, \mathcal H_2}_0(\Om))^* $. The next result summarizes the  properties of the operator $A$.
        
\begin{lemma}\label{lemma1}
        Let $A\colon W^{1, \mathcal H_1}_0(\Om) \times W^{1, \mathcal H_2}_0(\Om) \to (W^{1, \mathcal H_1}_0(\Om))^* \times (W^{1, \mathcal H_2}_0(\Om))^* $ be the operator defined by \eqref{A}. Then, $A$ is bounded, continuous, monotone (hence maximal monotone), and of type $(S_+)$.
\end{lemma}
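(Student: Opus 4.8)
The plan is to reduce everything to the componentwise structure of $A$. Observe that $A(u,v) = (A_1 u, A_2 v)$ where $A_i\colon W^{1,\mathcal H_i}_0(\Om)\to (W^{1,\mathcal H_i}_0(\Om))^*$ is the double phase operator
\[
\langle A_i w,\varphi\rangle_{\mathcal H_i} = \into \l(|\nabla w|^{p_i-2}\nabla w + \mu_i(x)|\nabla w|^{q_i-2}\nabla w\r)\cdot\nabla\varphi\,dx .
\]
Since the duality pairing on $\mathcal W$ splits as $\langle A(u,v),(\ph,\psi)\rangle_{\mathcal H_1\times\mathcal H_2} = \langle A_1 u,\ph\rangle_{\mathcal H_1} + \langle A_2 v,\psi\rangle_{\mathcal H_2}$, each of the four properties for $A$ follows at once from the corresponding property for each $A_i$: boundedness of $A$ from boundedness of $A_1,A_2$; continuity likewise; monotonicity because a sum of monotone maps in independent variables is monotone; and the $(S_+)$-property because if $(u_n,v_n)\weak(u,v)$ in $\mathcal W$ with $\limsup\langle A(u_n,v_n),(u_n,v_n)-(u,v)\rangle\le 0$, then splitting the pairing and using the monotonicity of each $A_i$ (so each of the two $\limsup$ terms is bounded below by a $\liminf$ of nonnegative quantities) forces $\limsup\langle A_i\cdot,\cdot\rangle\le 0$ for \emph{both} $i$, whence $u_n\to u$ and $v_n\to v$ separately, i.e.\ $(u_n,v_n)\to(u,v)$ in $\mathcal W$.

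So the work is entirely at the level of a single $A_i$, and this is where I would spend the effort. \textbf{Boundedness:} by H\"older's inequality in $L^{p_i}$ and in $L^{q_i}_{\mu_i}$ together with the embeddings $L^{\mathcal H_i}(\Om)\hookrightarrow L^{p_i}(\Om)\cap L^{q_i}_{\mu_i}(\Om)$, one estimates $|\langle A_i w,\ph\rangle_{\mathcal H_i}|\le \l(\|\nabla w\|_{p_i}^{p_i-1}\|\nabla\ph\|_{p_i} + \|\nabla w\|_{q_i,\mu_i}^{q_i-1}\|\nabla\ph\|_{q_i,\mu_i}\r)$, and then \eqref{5} converts the $\|\nabla w\|$-quantities into powers of $\|w\|_{1,\mathcal H_i,0}$, giving a bound on $\|A_i w\|$ in terms of $\max\{\|w\|_{1,\mathcal H_i,0}^{p_i-1},\|w\|_{1,\mathcal H_i,0}^{q_i-1}\}$. \textbf{Monotonicity:} this rests on the elementary vector inequality $(|a|^{r-2}a-|b|^{r-2}b)\cdot(a-b)\ge 0$ for all $a,b\in\R^N$, $r>1$, applied with $r=p_i$ and $r=q_i$ (the latter weighted by $\mu_i(x)\ge 0$); summing the two nonnegative integrands gives $\langle A_i w_1 - A_i w_2, w_1-w_2\rangle_{\mathcal H_i}\ge 0$. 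A continuous monotone operator is maximal monotone (Carl-Le-Motreanu), so that clause is free once continuity is in hand.

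\textbf{Continuity and the $(S_+)$-property} are the heart of the matter. For continuity, if $w_n\to w$ in $W^{1,\mathcal H_i}_0(\Om)$ then $\nabla w_n\to\nabla w$ in $L^{p_i}(\Om;\RN)$ and in $L^{q_i}_{\mu_i}(\Om)$; passing to a subsequence with a dominating function and using the continuity of $a\mapsto|a|^{r-2}a$ together with the Krasnoselskii-type / Nemytskii continuity in these (Musielak-)Orlicz settings, one shows $|\nabla w_n|^{p_i-2}\nabla w_n\to|\nabla w|^{p_i-2}\nabla w$ in $L^{p_i'}$ and the weighted analogue in the $q_i$-term, then concludes $\|A_i w_n - A_i w\|\to 0$ by a standard subsequence argument. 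For $(S_+)$: suppose $w_n\weak w$ in $W^{1,\mathcal H_i}_0(\Om)$ with $\limsup_n\langle A_i w_n, w_n-w\rangle_{\mathcal H_i}\le 0$. Since $A_i$ is monotone, $\langle A_i w_n - A_i w, w_n - w\rangle\ge 0$, and because $w_n\weak w$ we have $\langle A_i w,w_n-w\rangle\to 0$; hence $\langle A_i w_n, w_n-w\rangle\to 0$. Expanding, $\into|\nabla w_n|^{p_i} + \into\mu_i|\nabla w_n|^{q_i} \to \into|\nabla w|^{p_i} + \into\mu_i|\nabla w|^{q_i}$, which combined with $\nabla w_n\weak\nabla w$ in $L^{p_i}(\Om;\RN)$ and in $L^{q_i}_{\mu_i}(\Om)$, and the uniform convexity of these spaces (or a Br\'ezis--Lieb / Clarkson-type argument), yields $\nabla w_n\to\nabla w$ strongly in $L^{p_i}(\Om;\RN)$ and in $L^{q_i}_{\mu_i}(\Om)$; by \eqref{5} this gives $\|w_n-w\|_{1,\mathcal H_i,0}\to 0$. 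I expect the genuine obstacle to be the modular/norm bookkeeping in the Musielak-Orlicz setting, in particular passing cleanly from convergence of the modular $\rho_{\mathcal H_i}$ plus weak convergence to norm convergence via \eqref{2}--\eqref{5}; the purely algebraic monotonicity and the product structure are routine by comparison.
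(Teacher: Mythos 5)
Your plan is sound and is essentially the argument the paper relies on: the paper offers no proof of Lemma \ref{lemma1} beyond the remark that it is ``similar to Liu--Dai [Proposition 3.1]'', and your componentwise reduction $A(u,v)=(A_1u,A_2v)$ together with the standard boundedness, monotonicity, continuity and $(S_+)$ arguments for each single double phase operator $A_i$ is precisely that argument. The one step worth tightening is the line ``Expanding, $\into|\nabla w_n|^{p_i}+\into\mu_i|\nabla w_n|^{q_i}\to\into|\nabla w|^{p_i}+\into\mu_i|\nabla w|^{q_i}$'': this does not follow from weak convergence alone, and should be justified by applying Young's inequality to $\langle A_i w_n, w\rangle_{\mathcal H_i}$ to obtain $\limsup_n\bigl(\tfrac{1}{p_i}\|\nabla w_n\|_{p_i}^{p_i}+\tfrac{1}{q_i}\|\nabla w_n\|_{q_i,\mu_i}^{q_i}\bigr)\le \tfrac{1}{p_i}\|\nabla w\|_{p_i}^{p_i}+\tfrac{1}{q_i}\|\nabla w\|_{q_i,\mu_i}^{q_i}$ and then invoking weak lower semicontinuity of each term separately, after which your uniform convexity (Radon--Riesz) conclusion and the passage to $\|w_n-w\|_{1,\mathcal H_i,0}\to 0$ via \eqref{5} go through.
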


\begin{proof}
    The proof is similar to the one in Liu-Dai \cite[Proposition 3.1]{Liu-Dai-2018}. 
\end{proof}

\section{Main result}\label{section_3}
We assume the following hypotheses on the nonlinearities $f_1, f_2$. 

\begin{enumerate}
    \item[(H)] 
            $f_1, f_2 \colon \Omega \times \R \times \R \times \RN \times \RN \to \R  $ are Carath\'eodory functions such that
            \begin{enumerate}
                    \item[(i)] 
                            There exist $\alpha_i \in L^{\frac{r_i}{r_i-1}}(\Om) $ ($i=1,2$) such that
                            \begin{align*}
                                \begin{split}
                                    |f_1(x, s, t, \xi, \zeta)| 
                                    &\le A_1 |s|^{a_1}+ A_2 |t|^{a_2}+ A_3 |s|^{a_3} |t|^{a_4}+ A_4 |\xi|^{a_5} \\
                                    & \quad + A_5 |\zeta|^{a_6}+ A_6 |\xi|^{a_7} |\zeta|^{a_8}+  |\alpha_1(x)|, \\
                                    |f_2(x, s, t, \xi, \zeta)| 
                                    &\le B_1 |s|^{b_1}+ B_2 |t|^{b_2}+ B_3 |s|^{b_3} |t|^{b_4}+ B_4 |\xi|^{b_5} \\
                                    & \quad + B_5 |\zeta|^{b_6}+ B_6 |\xi|^{b_7} |\zeta|^{b_8}+  |\alpha_2(x)|,
                                \end{split}
                            \end{align*}
                            for a.\,a.\,$x \in \Om$, for all $s, t \in \R$ and for all $ \xi, \zeta \in \RN$, where $A_j, B_j$, $j=1, \dots 6$, are nonnegative constants and with $1< r_i< p_i^*$, $i= 1, 2$. Moreover,  the exponents $a_\ell, b_\ell $, $\ell= 1, \dots, 8$, are nonnegative and satisfy the following conditions
                            \begin{align*}
                                    \hspace*{1cm}
                                    \begin{array}[]{lll}
                                            \text{(E1)} \quad a_1 \le r_1- 1,
                                            & \text{(E2)} \quad \displaystyle a_2 \le \frac{r_1- 1}{r_1} r_2,\\[3ex]
                                            \text{(E3)} \quad  \displaystyle \frac{a_3}{r_1}+ \frac{a_4}{r_2} \le  \frac{r_1- 1}{r_1},  & \text{(E4)} \quad \displaystyle a_5 \le \frac{r_1- 1}{r_1} p_1,\\[3ex]
                                            \text{(E5)} \quad \displaystyle a_6 \le \frac{r_1- 1}{r_1} p_2,
                                            & \text{(E6)} \quad  \displaystyle \frac{a_7}{p_1}+ \frac{a_8}{p_2} \le  \frac{r_1- 1}{r_1},   \\[3ex]
                                            \text{(E7)} \quad b_1 \le \displaystyle\frac{r_2- 1}{r_2} r_1,
                                            & \text{(E8)} \quad \displaystyle b_2 \le r_2- 1,\\[3ex]
                                             \text{(E9)} \quad \displaystyle \frac{b_3}{r_1}+ \frac{b_4}{r_2} \le \frac{r_2- 1}{r_2},
                                            &\text{(E10)} \quad b_5 \le \displaystyle\frac{r_2- 1}{r_2} p_1,\\[3ex]
                                             \text{(E11)} \quad \displaystyle b_6 \le \frac{r_2- 1}{r_2} p_2,
                                            & \text{(E12)} \quad  \displaystyle \frac{b_7}{p_1}+ \frac{b_8}{p_2} \le  \frac{r_2- 1}{r_2}.  
                                    \end{array}
                            \end{align*}
                    \item[(ii)] 
                            There exist $\omega \in L^1(\Om)$ and $\Lambda, \Gamma \ge 0$ such that
                            \begin{align}\label{cond}
                                    \begin{split}
                                            & f_1(x, s, t, \xi, \zeta) s+f_2(x, s, t, \xi, \zeta) t\\ 
                                            & \le \Lambda \left(|\xi|^{p_1}+|\zeta|^{p_2}\right)+ \Gamma \left( |s|^{p_1}+|t|^{p_2}\right)+ \omega(x), 
                                    \end{split}
                            \end{align}
                            for a.\,a.\,$x \in \Om$, for all $s, t \in \R$ and for all $\xi, \zeta \in \R^N$ and with
                            \begin{align}\label{cond3}
                                    \Lambda +\Gamma \max\left\{\lambda_{1, p_1}^{-1},\lambda_{1, p_2}^{-1}\right\}< 1,
                            \end{align}
                            where $\lambda_{1, p_i}$ is the first eigenvalue of the $p_i$-Laplacian, see \eqref{6}.
            \end{enumerate}
\end{enumerate}

We say that $(u, v) \in W^{1, \mathcal H_1}_0(\Om) \times W^{1, \mathcal H_2}_0(\Om) $ is a weak solution of problem \eqref{syst} if
\begin{align}\label{weak}
        \begin{split}
                \into \l(|\nabla u|^{p_1-2} \nabla u+ \mu_1(x) |\nabla u|^{q_1-2} \nabla u \r) \cdot \nabla \varphi \,dx&= \into f_1(x, u, v, \nabla u, \nabla v) \varphi \,dx, \\
                \into \l(|\nabla v|^{p_2-2} \nabla v+ \mu_2(x) |\nabla v|^{q_2-2} \nabla v \r) \cdot \nabla \psi \,dx&= \into f_2(x, u, v, \nabla u, \nabla v) \psi \,dx,
        \end{split}
\end{align}
is satisfied for all test functions $(\varphi, \psi) \in W^{1, \mathcal H_1}_0(\Om) \times W^{1, \mathcal H_2}_0(\Om)$. Taking the embedding \eqref{embedding_sobolev} into account, along with the growth conditions in (H)(i), we see that the definition of a weak solution is well-defined. Indeed, if we estimate the integral concerning the function $f_1\colon \Omega \times \R \times \R \times \R^N \times \R^N \to \R$  by using condition (H)(i) we obtain several mixed terms. Let us consider, for example, the third term on the right-hand side of the growth of $f_1$. Applying H\"older's inequality we get
\begin{align*}
    \begin{split}
        &A_3\into |u|^{a_3}|v|^{a_4} \ph \,dx\\
        &\leq A_3\left(\into|u|^{a_3 s_1}\,dx\right)^{\frac{1}{s_1}}\left(\into|v|^{a_4 s_2}\,dx\right)^{\frac{1}{s_2}} \left(\into|\ph|^{ s_3}\,dx\right)^{\frac{1}{s_3}},
    \end{split}
\end{align*}
where $ (u,v) \in W^{1, \mathcal H_1}_0(\Om) \times W^{1, \mathcal H_2}_0(\Om) $, $\ph \in W^{1, \mathcal H_1}_0(\Om)$ and
\begin{align*}
        \frac{1}{s_1}+\frac{1}{s_2}+\frac{1}{s_3}=1.
\end{align*}
Taking $s_3=r_1$ with $1<r_1<p_1^*$ and using $s_1 \leq \frac{r_1}{a_3}$ as well as $s_2 \leq \frac{r_2}{a_4}$  leads to 
\begin{align*}
        \frac{a_3}{r_1}+ \frac{a_4}{r_2}\leq \frac{r_1-1}{r_1},
\end{align*}
which is exactly condition (E3). Note that the conditions in (H)(i) are chosen in order to prove our main results by applying the compact embedding \eqref{embedding_sobolev}. Of course, for the finiteness of the integrals in the weak formulation \eqref{weak}, we can also allow critical growth to have a well-defined weak formulation.

Now we are ready to formulate and prove our main result in this section.

\begin{theorem}\label{main}
        Let $1< p_i< q_i< N$, $i= 1, 2$, and let hypotheses \eqref{1} and (H) be satisfied. Then, there exists a weak solution $(u, v) \in W^{1, \mathcal H_1}_0(\Om) \times W^{1, \mathcal H_2}_0(\Om)$ of problem \eqref{syst}. 
\end{theorem}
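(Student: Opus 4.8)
The plan is to recast the weak formulation \eqref{weak} as an operator equation on the product space $\mathcal{W}$ and to apply Theorem \ref{theorem_pseudomonotone}. I introduce the Nemytskij-type operator $N_f\colon \mathcal{W}\to\mathcal{W}^*$ (with $\mathcal{W}^*=(W^{1,\mathcal H_1}_0(\Om))^*\times(W^{1,\mathcal H_2}_0(\Om))^*$) by
\[
\langle N_f(u,v),(\varphi,\psi)\rangle_{\mathcal H_1\times\mathcal H_2}:=\into f_1(x,u,v,\nabla u,\nabla v)\,\varphi\,dx+\into f_2(x,u,v,\nabla u,\nabla v)\,\psi\,dx,
\]
so that $(u,v)\in\mathcal{W}$ is a weak solution of \eqref{syst} exactly when $A(u,v)=N_f(u,v)$ in $\mathcal{W}^*$, with $A$ as in \eqref{A}. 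Setting $\mathcal{A}:=A-N_f$, it suffices to verify that $\mathcal{A}$ is bounded, pseudomonotone and coercive and then to invoke Theorem \ref{theorem_pseudomonotone} with $b=0$.

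Boundedness of $N_f$ (hence of $\mathcal{A}$, since $A$ is bounded by Lemma \ref{lemma1}) comes from the growth conditions (H)(i): after Hölder's inequality and the continuous embeddings $W^{1,\mathcal H_i}_0(\Om)\hookrightarrow L^{r_i}(\Om)$ together with $|\nabla u|\in L^{p_i}(\Om)$, $|\nabla v|\in L^{p_i}(\Om)$, each of the mixed terms in the bound for $|f_i|$ is controlled by a power of $\|(u,v)\|_{\mathcal{W}}$, the balance exponents (E1)--(E12) being precisely what is needed to pair these products against a test function. For pseudomonotonicity, let $(u_n,v_n)\weak(u,v)$ in $\mathcal{W}$ with $\lims\langle\mathcal{A}(u_n,v_n),(u_n,v_n)-(u,v)\rangle\le 0$. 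By the compact embeddings \eqref{embedding_sobolev} we have $u_n\to u$ in $L^{r_1}(\Om)$ and $v_n\to v$ in $L^{r_2}(\Om)$; combined with the $L^{r_i'}(\Om)$-boundedness of $f_i(\cdot,u_n,v_n,\nabla u_n,\nabla v_n)$ obtained from (H)(i) as above, this yields $\langle N_f(u_n,v_n),(u_n,v_n)-(u,v)\rangle\to 0$, whence $\lims\langle A(u_n,v_n),(u_n,v_n)-(u,v)\rangle\le 0$. The $(S_+)$-property of $A$ from Lemma \ref{lemma1} then gives $(u_n,v_n)\to(u,v)$ in $\mathcal{W}$, so $\nabla u_n\to\nabla u$ and $\nabla v_n\to\nabla v$ strongly; continuity of $A$ and of the Nemytskij operator $N_f$ forces $\mathcal{A}(u_n,v_n)\to\mathcal{A}(u,v)$ in $\mathcal{W}^*$, in particular $\mathcal{A}(u_n,v_n)\weak\mathcal{A}(u,v)$ and $\langle\mathcal{A}(u_n,v_n),(u_n,v_n)\rangle\to\langle\mathcal{A}(u,v),(u,v)\rangle$.

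For coercivity I compute
\[
\langle A(u,v),(u,v)\rangle=\|\nabla u\|_{p_1}^{p_1}+\|\nabla u\|_{q_1,\mu_1}^{q_1}+\|\nabla v\|_{p_2}^{p_2}+\|\nabla v\|_{q_2,\mu_2}^{q_2},
\]
while (H)(ii) together with the variational characterization \eqref{7} of $\lambda_{1,p_i}$ (used in the form $\|u\|_{p_1}^{p_1}\le\lambda_{1,p_1}^{-1}\|\nabla u\|_{p_1}^{p_1}$ and likewise for $v$) gives
\[
\langle N_f(u,v),(u,v)\rangle\le C\left(\|\nabla u\|_{p_1}^{p_1}+\|\nabla v\|_{p_2}^{p_2}\right)+\|\omega\|_1,\qquad C:=\Lambda+\Gamma\max\{\lambda_{1,p_1}^{-1},\lambda_{1,p_2}^{-1}\}<1
\]
by \eqref{cond3}. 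Since $0\le C<1$, subtracting and keeping the $q_i$-terms with the (smaller) coefficient $1-C$ yields
\[
\langle\mathcal{A}(u,v),(u,v)\rangle\ge(1-C)\left[\|\nabla u\|_{p_1}^{p_1}+\|\nabla u\|_{q_1,\mu_1}^{q_1}+\|\nabla v\|_{p_2}^{p_2}+\|\nabla v\|_{q_2,\mu_2}^{q_2}\right]-\|\omega\|_1,
\]
and \eqref{5} bounds the bracket below by $\min\{\|u\|_{1,\mathcal H_1,0}^{p_1},\|u\|_{1,\mathcal H_1,0}^{q_1}\}+\min\{\|v\|_{1,\mathcal H_2,0}^{p_2},\|v\|_{1,\mathcal H_2,0}^{q_2}\}$. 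Because $p_i,q_i>1$, this is superlinear in $\|(u,v)\|_{\mathcal{W}}=\|u\|_{1,\mathcal H_1,0}+\|v\|_{1,\mathcal H_2,0}$, so $\langle\mathcal{A}(u,v),(u,v)\rangle/\|(u,v)\|_{\mathcal{W}}\to\infty$ as $\|(u,v)\|_{\mathcal{W}}\to\infty$. Theorem \ref{theorem_pseudomonotone} then produces $(u,v)\in\mathcal{W}$ with $\mathcal{A}(u,v)=0$, i.e.\ $A(u,v)=N_f(u,v)$, which is exactly \eqref{weak}. I expect the main obstacle to be the first half of the pseudomonotonicity step: establishing the $L^{r_i'}(\Om)$-bounds on $f_i(\cdot,u_n,v_n,\nabla u_n,\nabla v_n)$ from the exponent conditions (E1)--(E12) and deducing $\langle N_f(u_n,v_n),(u_n,v_n)-(u,v)\rangle\to 0$, since that is where all the exponent bookkeeping is concentrated; the remainder is a standard $(S_+)$-plus-compact-embedding argument.
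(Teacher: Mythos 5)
Your proposal is correct and follows essentially the same route as the paper: the same operator $\mathcal{A}=A-N_f$ on $\mathcal{W}$, the same three claims (boundedness from (H)(i), pseudomonotonicity via the compact embedding \eqref{embedding_sobolev} plus the $(S_+)$-property of $A$ from Lemma \ref{lemma1}, coercivity via \eqref{7}, \eqref{cond}, \eqref{cond3} and \eqref{5}), and the same appeal to Theorem \ref{theorem_pseudomonotone}. The only difference is that the paper writes out the term-by-term H\"older estimates with exponents (E1)--(E6) that you correctly identify as the remaining bookkeeping in the pseudomonotonicity step.
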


\begin{proof}
    Let 
    \begin{align*}
        \hat N_{f_i}\colon W^{1, \mathcal H_1}_0(\Om) \times W^{1, \mathcal H_2}_0(\Om) \subset L^{r_1}(\Om) \times L^{r_2}(\Om) \to L^{r_1'}(\Om) \times L^{r_2'}(\Om) 
    \end{align*}
    be the Nemytskij operator associated to $f_i$. Moreover, let 
    \begin{align*}
        j_i^*\colon L^{r_1'}(\Om) \times L^{r_2'}(\Om) \to (W^{1, \mathcal H_1}_0(\Om))^* \times (W^{1, \mathcal H_2}_0(\Om))^* 
    \end{align*}
    be the adjoint operator for the embedding 
    \begin{align*}
        j_i\colon W^{1, \mathcal H_1}_0(\Om) \times W^{1, \mathcal H_2}_0(\Om) \to L^{r_1}(\Om) \times L^{r_2}(\Om).
    \end{align*}
    We then define 
    \begin{align*}
        N_{f_i}:= j_i^* \circ \hat N_{f_i}\colon W^{1, \mathcal H_1}_0(\Om) \times W^{1, \mathcal H_2}_0(\Om) \to (W^{1, \mathcal H_1}_0(\Om))^* \times (W^{1, \mathcal H_2}_0(\Om))^*,
    \end{align*}
    which is well-defined by hypotheses (H)(i). We set
    \begin{align}\label{mathcal-A}
        \mathcal A(u, v):= A(u, v)- N_{f_1}(u, v)- N_{f_2}(u, v).
    \end{align}
    Our aim is to apply Theorem \ref{theorem_pseudomonotone}. So, we need to show that $\mathcal A$ is bounded, pseudomonotone and coercive.
        
    {\bf Claim 1:} $\mathcal{A}$ is bounded.
        
    The boundedness of $\mathcal{A}$ follows directly from the boundedness of $A$ and the growth conditions on $f_1$ and $f_2$ stated in (H)(i).
        
    {\bf Claim 2:} $\mathcal{A}$ is pseudomonotone.

    To this end, let $\{(u_n, v_n)\}_{n \in \N} \subset W^{1, \mathcal H_1}_0(\Om) \times W^{1, \mathcal H_2}_0(\Om) $ be a sequence such that
    \begin{align}\label{weak lim}
        (u_n, v_n) \rightharpoonup (u, v) \quad \text{in } W^{1, \mathcal H_1}_0(\Om) \times W^{1, \mathcal H_2}_0(\Om)
    \end{align}
    and
	\begin{align}\label{limsup}
	    \limsup_{n \to \infty} \langle \mathcal A(u_n, v_n), (u_n- u, v_n- v) \rangle_{\mathcal H_1 \times \mathcal H_2} \le 0.
	\end{align}
    Taking the compact embedding \eqref{embedding_sobolev} into account yields
	\begin{align}\label{strong}
	    u_n \to u \quad \text{in } L^{r_1}(\Om) \quad \text{and} \quad v_n \to v \quad \text{in } L^{r_2}(\Om),
	\end{align}
	since $r_1< p_1^*$ and $r_2< p_2^*$, respectively. 
	We want to show that
	\begin{align}\label{lim-f}
	    \begin{split}
	        \lim_{n \to \infty} \into f_1(x, u_n, v_n, \nabla u_n, \nabla v_n)(u_n- u)\, dx&= 0, \\
	        \lim_{n \to \infty} \into f_2(x, u_n, v_n, \nabla u_n, \nabla v_n)(v_n- v) \,dx&= 0.
	    \end{split}
	\end{align}
    Let us consider the first expression in \eqref{lim-f}. By the growth condition (H)(i) it follows
	\begin{align}\label{f1}
	    \begin{split}
	        & \into f_1(x, u_n, v_n, \nabla u_n, \nabla v_n) (u_n-u) \,dx \\
	        & \leq \into \bigg(A_1 |u_n|^{a_1}+ A_2 |v_n|^{a_2}+ A_3 |u_n|^{a_3} |v_n|^{a_4} \\
	        & \quad+ A_4 |\nabla u_n|^{a_5}+ A_5 |\nabla v_n|^{a_6}+ A_6 |\nabla u_n|^{a_7} |\nabla v_n|^{a_8}+ |\alpha_1(x)|\bigg) |u_n- u| \,dx.
	    \end{split}
	\end{align}
    Applying H\"older's inequality, \eqref{strong} and conditions (E1) and (E2), respectively, we obtain
	\begin{align*}
	    \begin{split}
	        A_1 \into |u_n|^{a_1} |u_n- u| \,dx 
	        & \leq A_1 \l(\into |u_n|^{a_1 r_1'}\,dx \r)^{\frac{1}{r_1'}} \|u_n- u\|_{r_1} \\
	        & \le C_{1} \l(1+ \|u_n\|_{r_1}^{r_1-1}\r) \|u_n- u\|_{r_1} \to 0
	    \end{split}
	\end{align*}
    and
	\begin{align*}
	    \begin{split}
	        A_2 \into |v_n|^{a_2} |u_n- u| \,dx & \le A_2 \l(\into v_n^{a_2 r_1'} \,dx \r)^{\frac{1}{r_1'}} \| u_n- u\|_{r_1} \\
	        & \le C_{2} \l(1+ \|v_n\|_{r_2}^{\frac{r_2}{r_1'}} \r) \|u_n- u\|_{r_1} \to 0
	    \end{split}
	\end{align*}
	for some $C_1, C_2>0$.
    Moreover, H\"older's inequality with exponents $x_1, y_1, z_1> 1$ such that
	\begin{align*}
	    x_1 a_3 \le r_1, \quad y_1 a_4 \le r_2, \quad z_1= r_1, \quad \frac{1}{x_1}+ \frac{1}{y_1}+ \frac{1}{z_1}= 1
	\end{align*}
    gives, by hypothesis (E3),
	\begin{align*}
	    A_3 \into |u_n|^{a_3} |v_n|^{a_4} |u_n- u| \,dx \le A_3 \|u_n\|_{a_3 x_1}^{a_3} \|v_n \|_{a_4 y_1}^{a_4} \|u_n- u\|_{r_1} \to 0.
	\end{align*}
    Next we apply H\"older's inequality with exponents $r_1, r_1'$ and use (E4) and (E5) to get
	\begin{align*}
	    \begin{split}
	        A_4 \into |\nabla u_n|^{a_5} |u_n- u| \,dx 
	        & \leq A_4 \l(\into |\nabla u_n|^{a_5 r_1'} \,dx \r)^{\frac{1}{r_1'}} \|u_n- u\|_{r_1} \\
	        & \leq C_3 \l(1+ \|\nabla u_n\|_{p_1}^{\frac{p_1}{r_1'}} \r) \|u_n- u\|_{r_1} \to 0
	    \end{split}
	\end{align*}
    and
	\begin{align*}
	    \begin{split}
	        A_5 \into |\nabla v_n|^{a_6} |u_n- u| \,dx 
	        & \leq A_5 \l(\into |\nabla v_n|^{a_6 r_1'} \,dx \r)^{\frac{1}{r_1'}} \|u_n- u\|_{r_1} \\
	        & \leq C_4 \l(1+ \|\nabla v_n\|_{p_2}^{\frac{p_2}{r_1'}} \r) \|u_n- u\|_{r_1} \to 0
	\end{split}
	\end{align*}
for some $C_3, C_4>0$. Furthermore, condition (E6) allows us to apply H\"older's inequality with exponents $x_2, y_2, z_2> 1$ such that
	\begin{align*}
	    x_2 a_7 \leq p_1, \quad y_2 a_8 \leq p_2, \quad z_2= r_1, \quad \frac{1}{x_2}+ \frac{1}{y_2}+ \frac{1}{z_2}= 1
	\end{align*}
    in order to have
	\begin{align*}
	    A_6 \into |\nabla u_n|^{a_7} |\nabla v_n|^{a_8} (u_n- u) \,dx 
	    \leq A_6 \|\nabla u_n \|_{a_7 x_2}^{a_7} \|\nabla v_n\|_{a_8 y_2}^{a_8} \|u_n- u\|_{r_1} \to 0,
	\end{align*}
    since both $\|\nabla u_n \|_{a_7 x_2}$ and $\|\nabla v_n\|_{a_8 y_2}$ are bounded. Finally, for the last term in \eqref{f1} we have
	\begin{align*}
	    \into |\alpha_1(x)| (u_n- u) \,dx 
	    \leq \|\alpha_1\|_{r_1'} \|u_n- u\|_{r_1} \to 0.
	\end{align*}
    Combining all the calculations above give 
	\begin{align*}
	    \lim_{n \to \infty} \into f_1\l(x, u_n, v_n, \nabla u_n, \nabla v_n\r)(u_n- u)\,dx= 0.
	\end{align*}
    Applying similar arguments proves that
	\begin{align*}
	    \lim_{n \to \infty} \into f_2\l(x, u_n, v_n, \nabla u_n, \nabla v_n\r) (v_n- v) \,dx= 0.
	\end{align*}
	Hence, \eqref{lim-f} is fulfilled. We now take the weak formulation \eqref{weak}, replace $u$ by $u_n$, $v$ by $v_n$, $\varphi$ by $u_n- u$ and $\psi $ by $v_n- v$ and use \eqref{limsup} as well as \eqref{lim-f}  in order to have
	\begin{align}\label{limsup1}
	    \begin{split}
	        & \limsup_{n \to \infty} \langle A(u_n, v_n), (u_n-u, v_n- v)\rangle_{\mathcal H_1 \times \mathcal H_2}\\
	        & = \limsup_{n \to \infty} \langle \mathcal A(u_n, v_n), (u_n- u, v_n- v)\rangle_{\mathcal H_1 \times \mathcal H_2} \le 0.
        \end{split}
	\end{align}
    Since $A$ satisfies the $(S_+)$-property, see Lemma \ref{lemma1}, we derive from \eqref{weak lim} and \eqref{limsup1} that
	\begin{align*}
	    (u_n, v_n) \to (u, v) \quad \text{in } W^{1, \mathcal H_1}_0(\Om) \times W^{1, \mathcal H_2}_0(\Om).
	\end{align*}
    Since $\mathcal A$ is continuous we have $\mathcal A(u_n, v_n) \to \mathcal A(u, v)$ in $(W^{1, \mathcal H_1}_0(\Om))^* \times (W^{1, \mathcal H_2}_0(\Om))^*$, which proves that $\mathcal A $ is pseudomonotone.  

    {\bf Claim 3:} $\mathcal{A}$ is coercive.
    
    First of all, taking into account the representation \eqref{7} and replacing $r$ by $p_1$ and $p_2$, respectively, we have
	\begin{align}\label{eigenv}
	    \|u\|_{p_1}^{p_1} \leq \lambda_{1, p_1}^{-1} \|\nabla u \|_{p_1}^{p_1} \quad \text{and} \quad \|v\|_{p_2}^{p_2} \leq \lambda_{1, p_2}^{-1} \|\nabla v \|_{p_2}^{p_2},
	\end{align}
    for all $(u, v) \in W^{1, \mathcal H_1}_0(\Om) \times W^{1, \mathcal H_2}_0(\Om)$. Note that $W^{1, \mathcal H_1}_0(\Om) \subseteq \Wpzero{p_1}$ and $W^{1, \mathcal H_2}_0(\Om)\subseteq \Wpzero{p_2}$. Applying these facts along with \eqref{eigenv}, \eqref{cond}, and \eqref{5} leads to
	\begin{align*}
	    \begin{split}
	        &\langle \mathcal A(u, v), (u, v)\rangle_{\mathcal H_1 \times \mathcal H_2}\\
	        &= \into \l(|\nabla u|^{p_1-2} \nabla u+ \mu_1(x) |\nabla u|^{q_1-2} \nabla u\r) \cdot \nabla u \,dx \\
	        & \quad+ \into \l(|\nabla v|^{p_2-2} \nabla v+ \mu_2(x) |\nabla v|^{q_2-2} \nabla v\r) \cdot \nabla v \,dx \\
	        & \quad- \into f_1(x, u, v, \nabla u, \nabla v)u \,dx- \into f_2(x, u, v, \nabla u, \nabla v)v \,dx \\
	        & \geq \|\nabla u\|_{p_1}^{p_1}+ \|\nabla u\|_{q_1, \mu_1}^{q_1}+ \|\nabla v \|_{p_2}^{p_2}+ \|\nabla v\|_{q_2, \mu_2}^{q_2} \\
	        & \quad-\Lambda \left(\|\nabla u\|_{p_1}^{p_1}+\|\nabla v\|_{p_2}^{p_2}\right)- \Gamma \left( \|u\|_{p_1}^{p_1}+\|v\|_{p_2}^{p_2}\right)- \|\omega\|_1\\
        	& \geq \l(1- \Lambda- \Gamma \lambda_{1, p_1}^{-1}\r) \|\nabla u\|_{p_1}^{p_1}
        	+ \|\nabla u\|_{q_1, \mu_1}^{q_1} \\
        	& \quad +\l(1- \Lambda- \Gamma \lambda_{1, p_2}^{-1}\r) \|\nabla v\|_{p_2}^{p_2}+ \|\nabla v \|_{q_2, \mu_2}^{q_2}- \|\omega\|_1 \\
        	& \geq \bigg( 1- \Lambda- \Gamma \max\left\{\lambda_{1, p_1}^{-1},\lambda_{1, p_2}^{-1}\right\}\bigg)\left(\min\left\{\|u \|_{1, \mathcal H_1, 0}^{p_1}, \|u\|_{1, \mathcal H_1, 0}^{q_1}\right\} \right.\\
        	& \quad \left.+ \min\left\{\|v \|_{1, \mathcal H_2, 0}^{p_2}, \|v\|_{1, \mathcal H_2, 0}^{q_2}\right\}\right) -\|\omega\|_{L^1(\Om)}.
	    \end{split}
	\end{align*}	
    Since $1< p_i< q_i$ and  condition \eqref{cond3} holds, it follows that  \eqref{coercivity} is satisfied, and hence $\mathcal A$ is coercive. 

    From the Claims 1--3 we see that $\mathcal A$ is bounded, pseudomonotone and coercive. Therefore, by Theorem \ref{theorem_pseudomonotone}, there exists $(u, v) \in W_0^{1, \mathcal H_1}(\Om) \times W_0^{1, \mathcal H_2}(\Om)$ such that $\mathcal A(u, v)= 0$. Taking into account the definition of $\mathcal A$, see equation \eqref{mathcal-A}, it follows that $(u, v)$ is a weak solution of problem \eqref{syst}. That finishes the proof.
\end{proof}

\section{A uniqueness result}\label{section_4}

Now we consider the uniqueness of solutions of \eqref{syst}. To this end, let ${\bf f}\colon \Omega \times \R^2 \times (\R^N)^2 \to \R^2$ be the vector field defined by
\begin{align*}
    {\bf f}(x,s,\xi)=(f_1(x,s,\xi),f_2(x,s,\xi))
\end{align*}
for a.\,a.\,$x\in\Omega$, for all $s \in \R^2$ and for all $\xi \in (\R^N)^2$. We suppose the following conditions on $f$:
\begin{enumerate}
    \item[(U1)]
        There exists $c_1 \geq 0$ such that
        \begin{align*}
            ({\bf f}(x,s,\xi)-{\bf f}(x,t,\xi))\cdot (s-t) \leq c_1 |s-t|^2
        \end{align*}
        for a.\,a.\,$x\in\Omega$, for all $s,t \in \R^2$ and for all $\xi \in (\R^N)^2$.
    \item[(U2)]
        There exist $\rho=(\rho_1,\rho_2)$ with $\rho_i \in \Lp{s_i}$, $1<s_i<p_i^*$ and $c_2 \geq 0$ such that
        ${\bf f}(x,s,\cdot)-\rho(x)$ is linear on $(\R^N)^2$ for a.\,a.\,$x\in\Omega$ and for all $s\in \R^2$ and
        \begin{align*}
            |{\bf f}(x,s,\xi)-\rho(x)| \leq c_2 |\xi|
        \end{align*}
        for a.\,a.\,$x\in\Omega$, for all $s \in \R^2$ and for all $\xi \in (\R^N)^2$.
\end{enumerate}

Our main result in this section reads as follows.

\begin{theorem}\label{theorem_uniqueness}
    Let \eqref{1}, (H), (U1), and (U2) be satisfied. If $2= p_i< q_i< N$ for $i=1,2$ and
    \begin{align}\label{condition_uniqueness}
        c_1 \lambda_{1,2}^{-1}+c_2 \left(2\lambda_{1,2}^{-1}\right)^{\frac{1}{2}}<1,
    \end{align}
    then there exists a unique weak solution of problem \eqref{syst}. 
\end{theorem}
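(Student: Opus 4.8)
The plan is a standard monotonicity-plus-contraction argument, tailored to the double phase setting. Suppose $(u_1,v_1),(u_2,v_2)\in W^{1,\mathcal H_1}_0(\Om)\times W^{1,\mathcal H_2}_0(\Om)$ are two weak solutions of \eqref{syst}, and write $w:=u_1-u_2$, $z:=v_1-v_2$, $s_k:=(u_k,v_k)$, $\xi_k:=(\nabla u_k,\nabla v_k)$ for $k=1,2$. Subtracting the two identities \eqref{weak} for the two solutions and using $\varphi=w$, $\psi=z$ as test functions yields
\begin{align*}
        & \into |\nabla w|^2\,dx + \into \bigl(\mu_1(x)|\nabla u_1|^{q_1-2}\nabla u_1-\mu_1(x)|\nabla u_2|^{q_1-2}\nabla u_2\bigr)\cdot\nabla w\,dx \\
        & \qquad + \into |\nabla z|^2\,dx + \into \bigl(\mu_2(x)|\nabla v_1|^{q_2-2}\nabla v_1-\mu_2(x)|\nabla v_2|^{q_2-2}\nabla v_2\bigr)\cdot\nabla z\,dx \\
        & = \into \bigl({\bf f}(x,s_1,\xi_1)-{\bf f}(x,s_2,\xi_2)\bigr)\cdot(s_1-s_2)\,dx,
\end{align*}
where the $p_i$-parts of the double phase operator have become the Dirichlet forms precisely because $p_i=2$. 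Since $\eta\mapsto|\eta|^{q_i-2}\eta$ is monotone on $\RN$ and $\mu_i\ge0$, the two $\mu_i$-integrals on the left are nonnegative, so they may be dropped, leaving
\begin{align}\label{plan-reduction}
        \|\nabla w\|_2^2+\|\nabla z\|_2^2 \le \into \bigl({\bf f}(x,s_1,\xi_1)-{\bf f}(x,s_2,\xi_2)\bigr)\cdot(s_1-s_2)\,dx .
\end{align}

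The next step is to estimate the right-hand side of \eqref{plan-reduction} through the structural hypotheses (U1) and (U2). Splitting at the intermediate argument $(x,s_2,\xi_1)$,
\begin{align*}
        {\bf f}(x,s_1,\xi_1)-{\bf f}(x,s_2,\xi_2)=\bigl({\bf f}(x,s_1,\xi_1)-{\bf f}(x,s_2,\xi_1)\bigr)+\bigl({\bf f}(x,s_2,\xi_1)-{\bf f}(x,s_2,\xi_2)\bigr) .
\end{align*}
Pairing the first bracket with $s_1-s_2$ and applying (U1) bounds it pointwise by $c_1|s_1-s_2|^2$. For the second bracket, (U2) says that ${\bf f}(x,s_2,\cdot)-\rho(x)$ is linear on $(\RN)^2$, hence ${\bf f}(x,s_2,\xi_1)-{\bf f}(x,s_2,\xi_2)$ is the value of that linear map at $\xi_1-\xi_2$, and the growth bound in (U2) gives $|{\bf f}(x,s_2,\xi_1)-{\bf f}(x,s_2,\xi_2)|\le c_2|\xi_1-\xi_2|$. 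A pointwise Cauchy--Schwarz in $\R^2$ then turns \eqref{plan-reduction} into
\begin{align*}
        \|\nabla w\|_2^2+\|\nabla z\|_2^2 \le c_1\into |s_1-s_2|^2\,dx + c_2\into |\xi_1-\xi_2|\,|s_1-s_2|\,dx .
\end{align*}

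To close, I would invoke the Poincaré inequality. Because $W^{1,\mathcal H_i}_0(\Om)\subseteq\Wpzero{2}$, the characterization \eqref{7} with $r=2$ gives $\|w\|_2^2\le\lambda_{1,2}^{-1}\|\nabla w\|_2^2$ and $\|z\|_2^2\le\lambda_{1,2}^{-1}\|\nabla z\|_2^2$ (recall $p_1=p_2=2$, so only $\lambda_{1,2}$ appears). Setting $T:=\|\nabla w\|_2^2+\|\nabla z\|_2^2$, the first term above is $\le\lambda_{1,2}^{-1}T$, while for the mixed term the Cauchy--Schwarz inequality together with $|s_1-s_2|\le|w|+|z|$ and $(a+b)^2\le2(a^2+b^2)$ yields $\into|\xi_1-\xi_2|\,|s_1-s_2|\,dx\le(2\lambda_{1,2}^{-1})^{1/2}T$. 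Collecting the estimates,
\begin{align*}
        \Bigl(1-c_1\lambda_{1,2}^{-1}-c_2\bigl(2\lambda_{1,2}^{-1}\bigr)^{1/2}\Bigr)\bigl(\|\nabla w\|_2^2+\|\nabla z\|_2^2\bigr)\le0 ,
\end{align*}
so \eqref{condition_uniqueness} forces $\|\nabla w\|_2=\|\nabla z\|_2=0$, that is $u_1=u_2$ and $v_1=v_2$, proving uniqueness.

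I expect the main obstacle to be the convection term. Unlike the principal part, $\xi\mapsto{\bf f}(x,s,\xi)$ carries no monotonicity, so the argument rests entirely on the structural assumption (U2) --- that ${\bf f}$ is affine in the gradient variables with a uniform Lipschitz constant $c_2$ --- which is exactly what lets the $\xi_1-\xi_2$ contribution be absorbed linearly into $T$ via Poincaré. The hypothesis $p_i=2$ is likewise essential, since it makes the $p_i$-parts of the operator the (linear, coercive) Dirichlet forms that produce $\|\nabla w\|_2^2+\|\nabla z\|_2^2$ on the left; for general $p_i$ one would only control $\|\nabla w\|_{p_i}^{p_i}$-type quantities and the contraction would not close. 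The rest is bookkeeping: keeping track of the constants through the two Cauchy--Schwarz applications and the Poincaré inequality so that they assemble exactly into the smallness condition \eqref{condition_uniqueness}.
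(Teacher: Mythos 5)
Your proposal is correct and follows essentially the same route as the paper: test with the differences, drop the nonnegative $\mu_i$-terms by monotonicity of $\eta\mapsto|\eta|^{q_i-2}\eta$, split the convection term at the intermediate argument, apply (U1) and the linearity plus growth bound of (U2), and close with Cauchy--Schwarz and the Poincar\'e inequality \eqref{7} to obtain exactly the contraction constant $c_1\lambda_{1,2}^{-1}+c_2(2\lambda_{1,2}^{-1})^{1/2}$. Your handling of the (U2)-term (working directly with the linear map at $\xi_1-\xi_2$) is in fact a cleaner rendering of the same estimate the paper carries out in \eqref{u3}.
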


\begin{proof}
    Let $u=(u_1, u_2), v=(v_1, v_2) \in W_0^{1, \mathcal H_1}(\Om) \times W_0^{1, \mathcal H_2}(\Om)$ be two weak solutions of \eqref{syst}. Considering the weak formulation for $u$ and $v$, choosing $\varphi= u_1-v_1$ as well as $\psi= u_2- v_2$ and subtracting the related equations gives
	\begin{align}\label{u1}
	    \begin{split}
	        & \into |\nabla (u_1- v_1)|^2\,dx 
	        + \into |\nabla(u_2- v_2)|^2\,dx\\
	        & \quad + \into \mu_1(x) \l(|\nabla u_1|^{q_1-2} \nabla u_1- |\nabla v_1|^{q_1- 2} \nabla v_1 \r) \cdot \nabla(u_1- v_1)\,dx \\
	        & \quad + \into \mu_2(x) \l(|\nabla u_2|^{q_2-2} \nabla u_2- |\nabla v_2|^{q_2-2} \nabla v_2 \r) \cdot \nabla (u_2- v_2) \,dx \\
	        &= \into \l({\bf f}(x, u, \nabla u)- {\bf f}(x, v, \nabla u)\r) \cdot (u- v) \,dx \\
	        & \quad+ \into \l({\bf f}(x, v, \nabla u)-\rho(x)- {\bf f}(x, v, \nabla v)+\rho(x)\r)\cdot (u-v)\, dx.
	    \end{split}
	\end{align}
	By the monotonicity of $\xi \mapsto |\xi|^{q_i-2}\xi$ we see that the third and the fourth integral on the left-hand side of \eqref{u1} are nonnegative, that is,
	\begin{align}\label{u2}
	    \begin{split}
	        & \into |\nabla (u_1- v_1)|^2\,dx 
	        + \into |\nabla(u_2- v_2)|^2\,dx\\
	        & \quad + \into \mu_1(x) \l(|\nabla u_1|^{q_1-2} \nabla u_1- |\nabla v_1|^{q_1- 2} \nabla v_1 \r) \cdot \nabla(u_1- v_1)\,dx \\
	        & \quad + \into \mu_2(x) \l(|\nabla u_2|^{q_2-2} \nabla u_2- |\nabla v_2|^{q_2-2} \nabla v_2 \r) \cdot \nabla (u_2- v_2) \,dx \\
	        & \geq \into |\nabla (u_1- v_1)|^2\,dx 
	        + \into |\nabla(u_2- v_2)|^2\,dx\\
	        & = \|\nabla (u_1-v_1)\|_2^2+\|\nabla (u_2-v_2)\|_2^2.
	    \end{split}
	\end{align}
	On the other side, by applying (U1) to the first integral on the right-hand side of \eqref{u1} and (U2) to the second we obtain along with H\"older's inequality 
	\begin{align}\label{u3}
	    \begin{split}
	        &\into \l({\bf f}(x, u, \nabla u)- {\bf f}(x, v, \nabla u)\r) \cdot (u- v) \,dx \\
	        & \quad+ \into \l({\bf f}(x, v, \nabla u)-\rho(x)- {\bf f}(x, v, \nabla v)+\rho(x)\r)\cdot (u-v)\, dx\\
	        & \leq c_1 \left(\|u_1-v_1\|_2^2+\|u_2-v_2\|_2^2 \right) \\
	        & \quad+ \into (f_1(x,v_1,v_2,(u_1-v_1)\nabla (u_1-v_1),(u_1-v_1)\nabla (u_2-v_2))-\rho_1(x) )\,dx\\ 
	        & \quad + \into (f_2(x,v_1,v_2,(u_2-v_2)\nabla (u_1-v_1),(u_2-v_2)\nabla (u_2-v_2))-\rho_2(x) )\,dx\\
	        & \leq c_1 \lambda_{1,2}^{-1}\left(\|\nabla (u_1-v_1)\|_2^2+\|\nabla (u_2-v_2)\|_2^2 \right)\\
	        & \quad +c_2 \into \left(|u_1-v_1|+|u_2-v_2| \right) \left(|\nabla (u_1-v_2)|^2+|\nabla (u_2-v_2)|^2\right)^{\frac{1}{2}}\,dx\\
	        & \leq \left(c_1 \lambda_{1,2}^{-1}+c_2 \left(2\lambda_{1,2}^{-1}\right)^{\frac{1}{2}}\right)\left(\|\nabla (u_1-v_1)\|_2^2+\|\nabla (u_2-v_2)\|_2^2 \right).
	    \end{split}
	\end{align}
	Combining \eqref{u1}, \eqref{u2} and \eqref{u3} gives
	\begin{align}\label{u4}
	    \begin{split}
	        & \|\nabla (u_1-v_1)\|_2^2+\|\nabla (u_2-v_2)\|_2^2\\
	        & \leq \left(c_1 \lambda_{1,2}^{-1}+c_2 \left(2\lambda_{1,2}^{-1}\right)^{\frac{1}{2}}\right)\left(\|\nabla (u_1-v_1)\|_2^2+\|\nabla (u_2-v_2)\|_2^2 \right).
	    \end{split}
	\end{align}
	Taking \eqref{condition_uniqueness} into account, we see from \eqref{u4} that $u_1=v_1$ and $u_2=v_2$ and so the solution of \eqref{syst} is unique.
\end{proof}

\end{document}